\def\BState{\State\hskip-\ALG@thistlm}
\newtheorem{theorem}{Theorem}
\newtheorem{remark}{Remark}
\DeclareMathOperator*{\dom}{\mathbf{dom}}
\DeclareMathOperator*{\argmin}{arg\,min}
\DeclareMathOperator*{\argmax}{arg\,max}
\DeclareMathOperator*{\arginf}{arg\,inf}
\title{\LARGE \bf
On the Set of Possible Minimizers of a Sum of \\Known and Unknown Functions}
\author{Kananart Kuwaranancharoen and Shreyas Sundaram 
\thanks{This research was supported by NSF CAREER award
1653648.  The authors are with the School of Electrical and Computer Engineering at Purdue University.  Email: {\tt \{kkuwaran,sundara2\}@purdue.edu}.}
}
\begin{document}

\maketitle
\thispagestyle{empty}
\pagestyle{empty}

\begin{abstract}
The problem of finding the minimizer of a sum of convex functions is central to the field of optimization. Thus, it is of interest to understand how that minimizer is related to the properties of the individual functions in the sum. In this paper, we consider the scenario where one of the individual functions in the sum is not known completely. Instead, only a region containing the minimizer of the unknown function is known, along with some  general characteristics (such as strong convexity parameters).  Given this limited information about a portion of the overall function, we provide a necessary condition which can be used to construct an upper bound on the region containing the minimizer of the sum of known and unknown functions. We provide this necessary condition in both the general case where the uncertainty region of the minimizer of the  unknown function is arbitrary, and in the specific case where the uncertainty region is a ball.
\end{abstract}

\section{Introduction}

Optimization is an important tool in various fields, including machine learning \cite{friedman2001elements}, signal processing \cite{moon2000mathematical}, control theory, \cite{mayne2000constrained, bryson2018applied, calafiore2013robust},  and robotics \cite{zhu2015distributed, Montijano14, shin1985minimum}. Given an objective function to be optimized, there are several standard algorithms that can be applied to find the optimal variables \cite{boyd2011distributed, beck2009fast, kingma2014adam,hosseini2014online}.

However, in many applications, it may be the case that the objective function is only partially known.  For example, such scenarios are central to the field of robust optimization, where the objective function contains some parametric uncertainty, and the goal is to choose the optimization variable to be robust to the possible realizations of the uncertainty \cite{ben1998robust, bertsimas2011theory, jiang2016solution}.  The problem that we consider in this paper also has a similar flavor, in that we assume that the optimization objective is not fully known.  However, rather than seeking to find a single solution that is simultaneously robust to all possible realizations of the uncertain parameter (or learning that parameter \cite{jiang2016solution}), we instead seek to characterize the region where the minimizer could lie for {\it each} possible realization of the uncertainty.  This approach has the potential to yield  insights regarding the nature of the possible solutions to the given uncertain optimization problem.  

In our recent paper, \cite{kuwaranancharoen2018locationIEEE} we determined a region containing the possible minimizers of a sum of two strongly convex functions, given only the minimizers of the local functions, their strong convexity parameters, and a bound on their gradients. In contrast, in this paper, we shall consider the case of optimizing a sum of known and unknown functions where only limited information about the unknown function is available. 
In this case, we are given some general characteristics of the unknown function, namely a region containing the minimizer, and the strong convexity parameter of the function. Our goal is to determine necessary conditions for a point to be a minimizer of the sum. In particular, we will determine a region where the potential minimizer of the sum can lie.  Thus, if a point from within this region is chosen as an estimate of the true minimizer of the sum, the size of the region can be used to quantify how far the estimate can be from the true minimizer.  Below, we describe an example scenario to illustrate this problem.

\subsection*{An Example Scenario}
In supervised machine learning problems, one uses labeled training data in order to construct a model that can be used to perform regression or classification tasks. The training data consists of pairs $x_i$ and $y_i$ which are the feature vector and label of the $i$-th example, respectively. For simplicity, assume that we have $2$ training sets denoted by $\mathcal{D}_{j} = \{ x_i^{(j)}, y_i^{(j)} \}_{i=1}^{N_j}$ for $j \in \{1, 2 \}$.
We can write the aggregate loss function of the whole dataset $\mathcal{D} = \mathcal{D}_{1} \cup \mathcal{D}_{2}$ as
\begin{align*}
    L(w;  \mathcal{D}) = \underbrace{\sum_{i=1}^{N_1} l(w; \; x_i^{(1)}, y_i^{(1)})}_{L_1(w; \mathcal{D}_1)} + \underbrace{\sum_{i=1}^{N_2} l(w; \; x_i^{(2)}, y_i^{(2)})}_{L_2(w;\mathcal{D}_2)}
\end{align*}
where $w$ is a model parameter that we need to optimize and $l(w; \; x_i^{(j)}, y_i^{(j)})$ is a loss function for each sample. Assume that $L(w;  \mathcal{D})$ is a strongly convex function (which will be the case when we consider linear regression problems or functions incorporating $l_2$ regularization \cite{vapnik2013nature}). Suppose $w^*$ and $w^*_2$ are the minimizer of $L(w;  \mathcal{D})$ and $L_2(w; \mathcal{D}_2)$, respectively.

Now suppose that the entity trying to find the optimal parameter $w$ for $L(w,\mathcal{D})$ can only access the data set $\mathcal{D}_1$, but not $\mathcal{D}_2$ (or alternatively, can only access a corrupted or {\it poisoned} version of $\mathcal{D}_2$ \cite{biggio2012poisoning, mozaffari2015systematic}).  In this case, the entity may only know certain properties of the function $L_2(w; \mathcal{D}_2)$ (such as its general form, convexity parameters, etc.), and a region containing the minimizer of $L_2(w; \mathcal{D}_2)$ (e.g., based on the statistical properties of the underlying data).  Given this limited information about $L_2(w; \mathcal{D}_2)$, and with $L_1(w; \mathcal{D}_1)$ fully known, the entity could seek to find a region that is guaranteed to contain the minimizer of the true function $L(w; \mathcal{D})$.  This is the problem tackled in this paper.

\section{Notation and Preliminaries}

\subsection{Sets}
We denote the closure, interior, and boundary of a set $\mathcal{E}$ by $\bar{\mathcal{E}}$, ${\mathcal{E}}^\circ$, and $\partial \mathcal{E} = \bar{\mathcal{E}} \setminus {\mathcal{E}}^\circ$, respectively. 

\subsection{Linear Algebra}
We denote by $\mathbb{R}^n$ the $n$-dimensional Euclidean space. For simplicity, we often use $(x_1, \ldots, x_n)$ to represent the column vector 
$\begin{bmatrix}
x_1 & x_2 & \ldots & x_n
\end{bmatrix}^T$. We use $e_i$ to denote the $i$-th basis vector (the vector of all zeros except for a one in the $i$-th position). We denote by $\langle u, v \rangle$ the Euclidean inner product of $u$ and $v$ i.e., $\langle u, v \rangle = u^Tv$, by $\Vert \cdot \Vert$ the Euclidean norm $\lVert x \rVert:=(\sum_i x_i^2)^{1/2}$ and by $\angle (u,v)$ the angle between vectors $u$ and $v$. Note that 
\begin{align*}
    \angle (u,v) = \arccos \bigg( \frac{ \langle u, v \rangle }{\Vert u \Vert \Vert v \Vert}  \bigg).
\end{align*}
We use $\mathcal{B}(x_0, r) = \{ x \in \mathbb{R}^n: \Vert x-x_0 \Vert < r  \}$
and $\bar{\mathcal{B}}(x_0, r)$ to denote the open and closed ball, respectively, centered at $x_0$ of radius $r$. Moreover, the function $u(x_1, x_2): (\mathbb{R}^n \times \mathbb{R}^n) \setminus \{ (z_1,z_2) \in \mathbb{R}^n \times \mathbb{R}^n : z_1 = z_2 \}  \rightarrow \mathbb{R}^n$ denotes the unit vector in the direction of $x_1-x_2$, i.e.,
\begin{align}
    u(x_1, x_2) = \frac{x_1 - x_2}{\Vert x_1 - x_2 \Vert} \quad \text{with} \;\; x_1 \neq x_2. \label{def: unitvec}
\end{align}

\subsection{Convex Sets and Convex Functions}
A set $\mathcal{C}$ in $\mathbb{R}^n$ is said to be convex if, for all $x_1$ and $x_2$ in $\mathcal{C}$ and all $\theta$ in the interval $(0, 1)$, the point 
$(1- \theta) x_1 + \theta x_2 \in \mathcal{C}$. 

We say a vector $g \in \mathbb{R}^n$ is a subgradient of $f : \mathbb{R}^n \rightarrow \mathbb{R}$ at $x \in \dom f$ if for all $z \in \dom f$, $f(z) \geq f(x) + \langle g, z-x \rangle$. 

If $f$ is convex and differentiable, then its gradient at $x$ is a subgradient; however, a subgradient can exist even when $f$ is not differentiable at $x$. A function $f$ is called subdifferentiable at $x$ if there exists at least one subgradient at $x$. The set of subgradients of $f$ at the point $x$ is called the subdifferential of $f$ at $x$, and is denoted $\partial f(x)$. The subdifferential $\partial f(x)$ is always a closed convex set, even if $f$ is not convex. In addition, if $f$ is continuous at $x$, then the subdifferential $ \partial f(x)$ is bounded.

A function $f$ is called strongly convex with parameter $\sigma > 0$ (or $\sigma$-strongly convex) if for all points $x, y \in \dom f$,
$\langle g_x - g_y, x-y \rangle \geq \sigma \Vert x-y \Vert^2$ 
for all $g_x \in \partial f(x)$ and $g_y \in \partial f(y)$. We denote the set of all convex functions by $\mathcal{F}$, and the set of all $\sigma$-strongly convex functions with minimizer $x^*_u$ in the set $\mathcal{A} \subseteq \mathbb{R}^n$ and $\dom (\cdot) = \mathbb{R}^n$ by $\mathcal{S} (\mathcal{A}, \sigma) $.


\section{Problem Statement}
We consider a function of the form
\begin{align}
    f(x) = f^{k}(x) + f^{u}(x) \label{eqn: known+unknown}
\end{align}
where $f^k$ and $f^u$ are convex functions. We assume that we know $f^k$ exactly, but do not know $f^u$,  other than some general properties described below.

We assume that $f^k \in \mathcal{F}$ and $f^u \in \mathcal{S} (\mathcal{A}, \sigma)$ where $\mathcal{A}$ is a compact set (i.e., we only know that $f^u$ is $\sigma$-strongly convex and that its minimizer lies in some set $\mathcal{A}$). Our goal is to find the set of points $x \in \mathbb{R}^n$ that could potentially be the minimizer of $f(x)$ in \eqref{eqn: known+unknown}. To this end, we will seek to characterize the region 
\begin{multline}
    \mathcal{M} (f^k, \mathcal{A}, \sigma) \triangleq \big\{ x \in \mathbb{R}^n : \; \exists f^u \in \mathcal{S} (\mathcal{A}, \sigma), \\
    \mathbf{0} \in \partial f^k (x) + \partial f^u (x) \big\}.
\end{multline}
For simplicity of notation, we will omit the argument of the set $\mathcal{M} (f^k, \mathcal{A}, \sigma)$ and write it as $\mathcal{M}$. Note that $\mathcal{M}$ contains all points $x \in \mathbb{R}^n$ that can potentially be a minimizer of $f$, given $f^k$, and the quantity $\sigma$ and the set $\mathcal{A}$ pertaining to $f^u$. 

\begin{remark}
Returning to the regression scenario involving data that is not directly available to the optimizing entity (described in the Introduction), the unknown function would be of the form $f^u(x) = \| Ax - y \|^2$ where $A$ is a matrix containing (unknown) training data and $y$ is the (unknown) vector of corresponding labels. When $A$ has full rank, the loss function is strongly convex. In addition, if some general underlying statistical properties of the data are known to the optimizing entity, it could estimate a lower bound on the strong convexity parameter $\sigma$, and a region containing the possible minimizer of $f^u(x)$.  Thus, using this information, the central entity seeks to find the set of possible minimizers of the sum of this unknown function and its own loss function (corresponding to data that it has access to directly).
\end{remark}


\section{Analysis for General $\mathcal{A}$}

In this section, we provide a necessary condition for a point $x^* \in \dom f^k$ to be the minimizer of $f$ in the general case where the uncertainty region $\mathcal{A}$ of the minimizer of the unknown function is compact, but of arbitrary shape.

For any given point $x^* \in \mathbb{R}^n \setminus \mathcal{A}$, define the set 
\begin{align}
    \tilde{\mathcal{A}}(\mathcal{A}, x^*) \triangleq \big\{ x \in \partial \mathcal{A}: (1-\theta)x + \theta x^* \notin \mathcal{A}, \;\; \forall \theta \in (0,1) \big\}. \label{set: A_tilde}
\end{align}
In words, $\tilde{\mathcal{A}}(\mathcal{A}, x^*)$ is the set of points $x$ on the boundary of $\mathcal{A}$ such that the line joining $x$ to $x^*$ does not intersect $\mathcal{A}$ (except at $x$).

\begin{theorem} \label{thm: nc_general}
Suppose $f^k \in \mathcal{F}$ and $\mathcal{A} \subseteq \dom f^k$ is a compact set. A necessary condition for a point $x^* \in \mathbb{R}^n$ to be in $\mathcal{M} (f^k, \mathcal{A}, \sigma) \setminus \mathcal{A}$ is 
\begin{align}
    \min_{x_u^* \in \mathcal{A}, \; g^k_{x^*} \in \partial f^k(x^*)} \frac{ \langle g^k_{x^*} , u(x^*, x_u^*) \rangle }{\Vert x^*-x_u^* \Vert} \leq - \sigma. \label{eqn: result0}
\end{align}
Furthermore, the above inequality \eqref{eqn: result0} can be reduced to 
\begin{align}
    \min_{ ( x_u^*, g^k ) \in \mathcal{X}(f^k, \mathcal{A}, x^*) } \frac{ \langle g^k , u(x^*, x_u^*) \rangle }{\Vert x^*-x_u^* \Vert} \leq - \sigma  \label{eqn: result1}
\end{align}
where 
\begin{multline}
    \mathcal{X}(f^k, \mathcal{A}, x^*) \triangleq \big\{ (x, g) \in \tilde{\mathcal{A}}(\mathcal{A}, x^*) \times \partial f^k (x^*) : \\
    \; \langle g , u(x^*, x) \rangle < 0 \big\}. \label{set: X}
\end{multline}
\end{theorem}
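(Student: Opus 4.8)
The plan is to establish \eqref{eqn: result0} directly from the optimality condition together with strong convexity, and then to obtain \eqref{eqn: result1} as a purely geometric refinement of the set over which the minimum is taken.

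For the first inequality, suppose $x^* \in \mathcal{M}(f^k, \mathcal{A}, \sigma) \setminus \mathcal{A}$. By the definition of $\mathcal{M}$ there exist $f^u \in \mathcal{S}(\mathcal{A}, \sigma)$ and subgradients $g^k_{x^*} \in \partial f^k(x^*)$, $g^u_{x^*} \in \partial f^u(x^*)$ with $g^k_{x^*} + g^u_{x^*} = \mathbf{0}$. Let $x_u^* \in \mathcal{A}$ be the minimizer of $f^u$, so that $\mathbf{0} \in \partial f^u(x_u^*)$; since $x^* \notin \mathcal{A}$ we have $x^* \neq x_u^*$ and every quantity below is well defined. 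Applying the $\sigma$-strong convexity inequality to the pair $(x^*, x_u^*)$ with subgradients $g^u_{x^*}$ and $\mathbf{0}$ gives $\langle g^u_{x^*}, x^* - x_u^* \rangle \geq \sigma \Vert x^* - x_u^* \Vert^2$. Substituting $g^u_{x^*} = -g^k_{x^*}$, rearranging, and dividing by $\Vert x^* - x_u^* \Vert^2 > 0$ yields $\frac{\langle g^k_{x^*}, u(x^*, x_u^*) \rangle}{\Vert x^* - x_u^* \Vert} \leq -\sigma$. Since this particular pair $(x_u^*, g^k_{x^*})$ lies in $\mathcal{A} \times \partial f^k(x^*)$, the minimum in \eqref{eqn: result0} is at most $-\sigma$. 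The minimum is attained because $\mathcal{A}$ is compact, $\partial f^k(x^*)$ is compact (as $f^k$ is continuous and convex), and the objective is continuous on this product since $\mathrm{dist}(x^*, \mathcal{A}) > 0$.

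For the reduction to \eqref{eqn: result1}, I would first record the scaling behaviour of the objective $\phi(x_u^*, g^k) = \langle g^k, x^* - x_u^* \rangle / \Vert x^* - x_u^* \Vert^2$ along a ray emanating from $x^*$: writing $x_u^* = x^* - s v$ with $v$ a fixed unit vector and $s > 0$, one computes $\phi = \langle g^k, v \rangle / s$, i.e.\ $\phi$ is homogeneous of degree $-1$ in $s$. Hence, whenever $\langle g^k, v \rangle < 0$, the value $\phi$ is negative and is made smallest, along that ray, by taking $s$ as small as possible. Because $x^* \notin \mathcal{A}$ and $\mathcal{A}$ is compact, the smallest $s$ with $x^* - s v \in \mathcal{A}$ is attained at a boundary point $x = x^* - s_{\min} v$ whose open segment to $x^*$ lies entirely outside $\mathcal{A}$; by \eqref{set: A_tilde} this means $x \in \tilde{\mathcal{A}}(\mathcal{A}, x^*)$. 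Consequently any minimizer of the unconstrained problem \eqref{eqn: result0} with negative value can be replaced, without increasing $\phi$, by a pair whose first component lies in $\tilde{\mathcal{A}}(\mathcal{A}, x^*)$ and which still satisfies $\langle g^k, u(x^*, x) \rangle < 0$, i.e.\ a pair in $\mathcal{X}(f^k, \mathcal{A}, x^*)$. This gives $\min_{\mathcal{X}} \phi \leq \min_{\mathcal{A} \times \partial f^k} \phi$, and since $\mathcal{X}(f^k, \mathcal{A}, x^*) \subseteq \mathcal{A} \times \partial f^k(x^*)$ the reverse inequality is immediate, so the two minima coincide. As \eqref{eqn: result0} forces this common value to be at most $-\sigma < 0$ (which in particular makes $\mathcal{X}$ nonempty), the conditions \eqref{eqn: result0} and \eqref{eqn: result1} are equivalent.

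The step I expect to be the main obstacle is the geometric reduction in the last paragraph, where three points must be argued with care: (i) that the smallest feasible $s$ is actually attained, which relies on the closedness of $\mathcal{A}$ and on $\mathrm{dist}(x^*, \mathcal{A}) > 0$ to keep $s_{\min}$ bounded away from $0$; (ii) that the resulting boundary point genuinely satisfies the ``no re-entry'' condition defining $\tilde{\mathcal{A}}(\mathcal{A}, x^*)$; and (iii) that this replacement never increases $\phi$, so that restricting to $\mathcal{X}$ loses nothing. I would also handle the degenerate bookkeeping of when $\mathcal{X}$ is empty --- which happens precisely when the unconstrained minimum is nonnegative --- to confirm that both inequalities still fail simultaneously, preserving the equivalence.
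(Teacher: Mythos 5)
Your proposal is correct and follows essentially the same route as the paper: the first inequality comes from combining the strong-convexity inequality at $(x^*, x_u^*)$ with $g^u_{x_u^*}=\mathbf{0}$ and the stationarity relation $g^u_{x^*}=-g^k_{x^*}$, and the reduction to $\mathcal{X}$ comes from sliding the candidate $x_u^*$ along the ray toward $x^*$ to the nearest point of $\mathcal{A}$, which lies in $\tilde{\mathcal{A}}(\mathcal{A},x^*)$ and can only decrease the objective when $\langle g^k, u(x^*,x_u^*)\rangle<0$. Your explicit use of the attained minimal parameter $s_{\min}$ is in fact a slightly cleaner way to land directly in $\tilde{\mathcal{A}}$ than the paper's one-step comparison via the auxiliary sets $\mathcal{D}$ and $\mathcal{E}$, but the underlying argument is the same.
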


\begin{proof}
Suppose $f^u \in \mathcal{S} (\mathcal{A}, \sigma)$. For any $x$, $y \in \mathbb{R}^n$, let $g_x^u \in \partial f^u (x)$ and $g_y^u \in \partial f^u (y)$.
From the definition of a strongly convex function, we have
\begin{equation*}
\langle g_x^u - g_{y}^u,  x-y \rangle \geq \sigma \Vert x-y \Vert^2
\end{equation*}
for all $x, y \in \mathbb{R}^n$. 

Let $x_u^* \in \mathcal{A}$ be the true minimizer of $f^u$ and suppose $x^*$ is the minimizer of $f = f^k + f^u$. Then, substitute $x^*$ into $x$ and $x_u^*$ into $y$ to get
\begin{align*}
\langle g_{x^*}^u - g_{x_u^*}^u, x^*-x_u^* \rangle &\geq \sigma \Vert x^*-x_u^* \Vert^2
\end{align*}
for all $g^u_{x^*} \in \partial f^u(x^*)$ and $g^u_{x_u^*} \in \partial f^u(x_u^*)$. Since $x_u^*$ is the minimizer of $f^u$, we have $\mathbf{0} \in \partial f^u(x_u^*)$. Consider $x^* \notin \mathcal{A}$, which implies $x^* \neq x_u^*$, and rewrite the inequality above (with $g^u_{x_u^*} = \mathbf{0}$) to get
\begin{align*}
\Big\langle g_{x^*}^u,  \frac{x^*-x_u^*}{\Vert x^*-x_u^* \Vert} \Big\rangle \geq \sigma \Vert x^*-x_u^* \Vert > 0. 
\end{align*}
Recall the definition of $u(\cdot, \cdot)$ in \eqref{def: unitvec}. The inequality above becomes      
\begin{align}
    \langle g_{x^*}^u , u(x^*, x_u^*) \rangle \geq \sigma \Vert x^*-x_u^* \Vert. \label{eqn: angle}
\end{align}
Using the fact that $x^*$ is the minimizer of $f = f^k + f^u$, we get $\mathbf{0} \in \partial f^k (x^*) + \partial f^u (x^*)$, so there exists $g^k_{x^*} \in \partial f^k(x^*)$ and $g^u_{x^*} \in \partial f^u(x^*)$ such that $g^k_{x^*} + g_{x^*}^u = \mathbf{0}$. Since the inequality \eqref{eqn: angle} is true for any $g_{x^*}^u \in \partial f^u(x^*)$, we can apply $g_{x^*}^u = - g^k_{x^*}$ to \eqref{eqn: angle} and get 
\begin{align*}
    \langle - g^k_{x^*} , u(x^*, x_u^*) \rangle &\geq \sigma \Vert x^*-x_u^* \Vert, \nonumber \\
    \Leftrightarrow \quad \frac{ \langle g^k_{x^*} , u(x^*, x_u^*) \rangle }{\Vert x^*-x_u^* \Vert} &\leq - \sigma.  
\end{align*}
Thus, if $f^u \in \mathcal{S} (\mathcal{A}, \sigma)$, we have a necessary condition that
\begin{multline*}
    \emph{if} \quad x^* \in \mathcal{M} \setminus \mathcal{A} \quad \emph{then there exist} \quad  x_u^* \in \mathcal{A}  \quad  \emph{and} \\
      \quad g^k_{x^*} \in \partial f^k(x^*) \quad  \emph{such that} \quad \frac{ \langle g^k_{x^*} , u(x^*, x_u^*) \rangle }{\Vert x^*-x_u^* \Vert} \leq - \sigma. 
\end{multline*}
Since the sets $\mathcal{A}$ and $\partial f^k(x^*)$ are compact by the assumption that $f^k$ is convex, the necessary condition above is equivalent to
\begin{align}
    \min_{x_u^* \in \mathcal{A}, \; g^k_{x^*} \in \partial f^k(x^*)} \frac{ \langle g^k_{x^*} , u(x^*, x_u^*) \rangle }{\Vert x^*-x_u^* \Vert} \leq - \sigma. \label{eqn: nec_1}
\end{align}
Next, we will show that we can consider the minimum over the set $\mathcal{X}$ (defined in \eqref{set: X}) instead of $\mathcal{A} \times \partial f^k(x^*)$. Define the set
\begin{multline*}
    \mathcal{D} (f^k, x^*) \triangleq \{ (x, g^k_{x^*} ) \in \mathbb{R}^n \times \partial f^k(x^*): \\
    \langle g^k_{x^*} , u(x^*, x) \rangle < 0 \}.
\end{multline*}
First, using the fact that $\sigma$ and $\Vert x^*-x_u^* \Vert$ are positive, we have
\begin{align*}
    \frac{ \langle g^k_{x^*} , u(x^*, x_u^*) \rangle }{\Vert x^*-x_u^* \Vert} \leq - \sigma \quad \Rightarrow \quad \langle g^k_{x^*} , u(x^*, x_u^*) \rangle < 0.  
\end{align*}
This means that we can consider the pair $(x_u^*, g^k_{x^*} )$ inside the set $(\mathcal{A} \times \mathbb{R}^n ) \cap \mathcal{D}$ instead of $\mathcal{A} \times \partial f^k(x^*) $. Next, let 
$$\mathcal{E}( \mathcal{A}, x^* ) \triangleq \{ x \in \mathcal{A} : \exists \theta \in (0,1), \;\; (1- \theta)x + \theta x^* \in \mathcal{A} \}. $$
Suppose $(x_u^{(1)}, g^k_{x^*}) \in \mathcal{D} \cap (\mathcal{E} \times \mathbb{R}^n)$. We choose $\bar{\theta}$ so that $\bar{\theta} \in (0,1)$ and $x_u^{(2)} = (1- \bar{\theta})x_u^{(1)} + \bar{\theta} x^* \in \mathcal{A}$, i.e., $x_u^{(2)}$ is in between $x_u^{(1)}$ and $x^*$, and also in the set $\mathcal{A}$. 
We have 
$$ \langle g^k_{x^*} , u(x^*, x_u^{(1)}) \rangle = \langle g^k_{x^*} , u(x^*, x_u^{(2)})  \rangle < 0$$
and so 
\begin{align*}
     \frac{ \langle g^k_{x^*} , u(x^*, x_u^{(2)}) \rangle }{\Vert x^*-x_u^{(2)} \Vert} < \frac{ \langle g^k_{x^*} , u(x^*, x_u^{(1)}) \rangle }{\Vert x^*-x_u^{(1)} \Vert},
\end{align*}
i.e., if $x_u^{(1)}$ satisfies \eqref{eqn: checkpt}, then so does $x_u^{(2)}$.
This means that we can consider the pair $(x_u^*, g^k_{x^*} )$ inside the set $\big( (\mathcal{A} \setminus \mathcal{E}) \times \mathbb{R}^n \big) \cap \mathcal{D}$ instead of $\mathcal{A} \times \partial f^k (x^*)$. However, we will show that in fact the set 
\begin{align*}
    \mathcal{A} \setminus \mathcal{E} = \big\{ x \in \mathcal{A} : \; (1- \theta)x + \theta x^* \notin \mathcal{A}, \quad \forall \theta \in (0,1) \big\}
\end{align*}
is contained in $\partial \mathcal{A}$, i.e., $\mathcal{A} \setminus \mathcal{E} \subseteq \partial \mathcal{A}$.
Suppose $x \in \mathcal{A}^\circ$ so there exists $\epsilon > 0$ such that $\mathcal{B}(x, \epsilon) \subseteq \mathcal{A}$. 
By choosing $\hat{\theta} = \frac{\epsilon}{2 \Vert x^* - x \Vert}$, we get $(1- \hat{\theta})x + \hat{\theta} x^* \in \mathcal{A}$ and $\hat{\theta} \in (0,1)$ since $x^* \notin \mathcal{A}$. 
This implies that $x \notin \mathcal{A} \setminus \mathcal{E}$ and therefore $\mathcal{A} \setminus \mathcal{E} \subseteq \partial \mathcal{A}$. Using the definition of $\tilde{\mathcal{A}}$ in \eqref{set: A_tilde}, we can then rewrite the set $\mathcal{A} \setminus \mathcal{E}$ as follows:
\begin{align*}
    \mathcal{A} \setminus \mathcal{E}( \mathcal{A}, x^* )  = \tilde{\mathcal{A}}(\mathcal{A}, x^*).
\end{align*}
From the definition of $\mathcal{X}$ in \eqref{set: X}, we have
\begin{align*}
    \big( \tilde{\mathcal{A}}( \mathcal{A}, x^* ) \times \mathbb{R}^n \big) \cap \mathcal{D} (f^k, x^*) = \mathcal{X}(f^k, \mathcal{A}, x^*).
\end{align*}
Thus, the necessary condition \eqref{eqn: nec_1} reduces to
\begin{align*}
    \min_{ ( x_u^*, g^k_{x^*} ) \in \mathcal{X} (f^k, \mathcal{A}, x^*) } \frac{ \langle g^k_{x^*} , u(x^*, x_u^*) \rangle }{\Vert x^*-x_u^* \Vert} \leq - \sigma .
\end{align*}
\end{proof}

We can interpret the necessary condition in Theorem \ref{thm: nc_general} as follows. To check whether $x^* \in \mathbb{R}^n$ can be a minimizer of $f(x)$, we can follow the inequality \eqref{eqn: result0} and search for a pair $(x_u^*, g^k_{x^*})$ with $x_u^* \in \mathcal{A}$ and $g^k_{x^*} \in \partial f^k(x^*)$ such that the pair satisfies the inequality
\begin{align}
    \frac{ \langle g^k_{x^*} , u(x^*, x_u^*) \rangle }{\Vert x^*-x_u^* \Vert} \leq - \sigma . \label{eqn: checkpt}
\end{align}
However, the inequality \eqref{eqn: result1} with $\mathcal{X} (f^k, \mathcal{A}, x^*)$ defined in \eqref{set: X} suggests that we do not have to search throughout the space $\mathcal{A} \times \partial f^k(x^*)$. Instead, we can restrict our attention to be in the set $\mathcal{X}$. 
Now we have the variables $x_u^*$ and $g^k_{x^*}$ that are coupled through the inequality $\langle g^k_{x^*} , u(x^*, x_u^*) \rangle < 0$. 
That is, if we first choose $g^k_{x^*} \in \partial f^k(x^*)$, then we can consider $x_u^*$ that is in the set $\{ x \in \partial \mathcal{A} : \langle g^k_{x^*} , u(x^*, x) \rangle < 0, \quad (1-\theta)x + \theta x^* \notin \mathcal{A}, \quad \forall \theta \in (0,1) \}$. 
Similarly, if we first choose $x_u^* \in \{ x \in \partial \mathcal{A} : \; (1-\theta)x + \theta x^* \notin \mathcal{A}, \quad \forall \theta \in (0,1) \}$, then we can consider $g^k_{x^*}$ that is in the set $\{ g \in \partial f^k(x^*) : \langle g , u(x^*, x) \rangle < 0 \}$.

If the function $f^k$ is differentiable at $x^*$, we have a single element in the set $\partial f^k(x^*)$, namely $\nabla f^k(x^*)$, and we can search for $x_u^* \in \partial \mathcal{A}$ such that $\langle \nabla f^k(x^*), u(x^*, x_u^*) \rangle < 0$. However, if the set $\mathcal{A}$ is arbitrary, this search may be computationally expensive. In the next section, we consider additional structure on the set $\mathcal{A}$ to simplify the search.

\begin{remark}
Note that the set $\mathcal{A}^\circ \subseteq \mathcal{M}( f^k, \mathcal{A}, \sigma )$.  To see this, note that for all $x^* \in \mathcal{A}^\circ$, there exists $\epsilon > 0$ such that $\mathcal{B} ( x^*, \epsilon ) \subset \mathcal{A}^\circ$. Suppose $g \in \partial f^k(x^*)$. We can choose $f^u(x) = \frac{\sigma_u}{2} \big\| x - \big( x^* + \frac{g}{\sigma_u} \big) \big\|^2$ where $\sigma_u = \frac{2 k \|g \|}{\epsilon}$ and $k = \max \big\{ 1, \frac{\sigma \epsilon}{2 \|g \|} \big\} $. One can verify that $x^*_u \in \mathcal{B} ( x^*, \epsilon )$, $\nabla f^u(x^*) = -g$, and $\sigma_u \geq \sigma$.  
\end{remark}

\section{Analysis for the Case where $\mathcal{A}$ is a Ball}

Here, we consider additional structure on the uncertainty set $\mathcal{A}$ in order to provide a more specific characterization of the region $\mathcal{M}$. In particular, we consider $\mathcal{A} = \Bar{\mathcal{B}} (\bar{x}, \epsilon_0)$, where $\bar{x}$ is the best guess of what the true parameter $x_u^*$ is, and $\epsilon_0$ is the maximum possible deviation of the true minimizer from our best guess. 

We begin by investigating a property of the necessary condition \eqref{eqn: result1} under a coordinate transformation. Suppose $x = (x_{(1)}, x_{(2)}, \ldots, x_{(n)}) \in \mathbb{R}^n$ and $x^* \notin \overline{\mathcal{B}} (\bar{x}, \epsilon_0)$. 
Let $\mathbf{T}$ and $\mathbf{R}$ be the translation and rotation operators such that $\mathbf{R} ( \mathbf{T}(\bar{x}) ) = \mathbf{0}$, $\mathbf{R} ( \mathbf{T}(x^*) ) = (\tilde{x}_{(1)}^*, 0, \ldots, 0)$ with $\tilde{x}_{(1)}^* > 0$, and $\mathbf{R} (g^k)  = (\tilde{g}_{(1)}, \tilde{g}_{(2)}, 0, \ldots, 0)$ with $\tilde{g}_{(2)} \geq 0$ while preserving the distance between any two points. In other words, given the ball $\bar{\mathcal{B}} (\bar{x}, \epsilon_0)$, a point $x^*$ and a vector $g^k$, we transform the coordinates so that the ball is centered at the origin, the point $x^*$ lies on the $x$-axis, and the vector $g^k$ lies on the $x$-$y$ plane.

Next, consider the expression
$\frac{ \langle g , u(x^*, x_u^*) \rangle }{\Vert x^*-x_u^* \Vert}$.
Notice that both numerator and denominator can be written as inner products.
Since  $\mathbf{R}$ is a unitary operator, we have  
\begin{align*}
    \frac{  \big\langle \mathbf{R}( g ) , u\big( \mathbf{R} (\mathbf{T} (x^*)), \mathbf{R}( \mathbf{T}( x_u^*))\big) \big\rangle }{\Vert \mathbf{R}( \mathbf{T}( x^*)) - \mathbf{R}( \mathbf{T}( x_u^*)) \Vert}  
    = \frac{ \langle g , u(x^*, x_u^*) \rangle }{\Vert x^*-x_u^* \Vert}.
\end{align*}
This means that even though we use the coordinate transformation $\mathbf{R} ( \mathbf{T} (\cdot) )$, we can still apply Theorem \ref{thm: nc_general}. Therefore, for the purpose of deriving our main result, without loss of generality, we can consider $\bar{x} = \mathbf{0}$, $x^* = (x_{(1)}^*, 0, \ldots, 0)$ where $x_{(1)}^* > \epsilon_0$, and $g \;(= g^k) = (g_{(1)}, g_{(2)}, 0, \ldots, 0)$, where $g_{(2)} \geq 0$.   

Before going into the result, we introduce some definitions that will appear in the theorem. For any given $x^* \in \mathbb{R}^n$, define $z_1 (x^*) \in \mathbb{R}^n$ as
\begin{align}
    z_1(x^*) \triangleq \argmin_{x \in \overline{\mathcal{B}} (\bar{x}, \epsilon_0) } \Vert x - x^* \Vert . \label{def: z_1}
\end{align}
By our assumption that $x^* = (x_{(1)}^*, 0, \ldots, 0)$, we have $z_1(x^*) = (\epsilon_0, 0, \ldots, 0)$.
Since $x^* \notin \overline{\mathcal{B}} (\bar{x}, \epsilon_0)$, the point $z_1$ is unique and is on $\partial \overline{\mathcal{B}} (\bar{x}, \epsilon_0)$.
If $g \neq \alpha (x^* - \bar{x})  = (\alpha x_{(1)}^*, 0, \ldots, 0)$ for all $\alpha \geq 0$ (i.e., $\angle (g, x^* - \bar{x}) \neq 0$), we define the set $\mathcal{P}$ to be such that
\begin{align*}
    \mathcal{P}( g, x^* ) \triangleq \argmin_{x \in \partial \overline{\mathcal{B}} (\bar{x}, \epsilon_0) } \angle ( g, x - x^* ),
\end{align*}
the point $z_2 \in \mathbb{R}^n$ to be such that
\begin{align}
    z_2( g, x^* ) \triangleq \argmin_{x \in \mathcal{P} (g, x^*)} \Vert x - x^* \Vert, \label{def: z_2}
\end{align}
and the curve $\mathcal{C}_0 (\bar{x}, \epsilon_0, g, x^*)$ to be the shortest path on the surface $\partial \overline{\mathcal{B}} (\bar{x}, \epsilon_0)$ that connects $z_1$ and $z_2$ together, i.e., $\mathcal{C}_0$ is the geodesic path between $z_1$ and $z_2$ on $\partial \overline{\mathcal{B}} (\bar{x}, \epsilon_0)$. 

To clarify these definitions, we introduce two more objects. Let $L$ be the ray that starts from the point $x^*$ and runs parallel to the vector $g$ i.e.,
\begin{align*}
    L(g, x^*) = \{ x \in \mathbb{R}^n: \; \exists \; t \in [0, \infty), \; x = x^* + t g \}.
\end{align*}
If $g \neq \alpha (x^* - \bar{x}) = (\alpha x_{(1)}^*, 0, \ldots, 0)$ for all $\alpha \in \mathbb{R}$, let $P_2$ be the 2-dimensional plane that contains the vectors $g$ and $x^* - \bar{x}$ as its bases, and contains the point $x^*$, i.e.,
\begin{align*}
    P_2 (\bar{x}, g, x^*) &\triangleq \{ x \in \mathbb{R}^n: \; \exists \; s, t \in \mathbb{R} \;\; \text{such that} \\
    & \qquad \qquad \qquad \qquad x = x^* + sg + t(x^* - \bar{x}) \} \\
    &= \{ x \in \mathbb{R}^n: \; x_{(3)} = x_{(4)} = \ldots = x_{(n)} = 0 \},
\end{align*}
where the second equality follows from the fact that $x^* = (x^*_{(1)}, 0, 0, \ldots, 0)$ and $g = (g_{(1)}, g_{(2)}, 0, \ldots, 0)$.

There are two possible cases: (i) the ray $L$ passes through the ball $\overline{\mathcal{B}} (\bar{x}, \epsilon_0)$ and (ii) the ray $L$ does not pass through the ball $\overline{\mathcal{B}} (\bar{x}, \epsilon_0)$. 

In the first case, we have
\begin{align*}
    \min_{x \in \partial \overline{\mathcal{B}} (\bar{x}, \epsilon_0) } \angle ( g, x - x^* ) = 0,
\end{align*}
and there are either one or two elements in the set $\mathcal{P}$. The point $z_2$ is the one that closer to the point $x^*$. Note that $z_2 \in P_2$. The illustration of the first case is shown in Fig. \ref{fig: cir_case1}.

In the second case, we have
\begin{align*}
    \min_{x \in \partial \overline{\mathcal{B}} (\bar{x}, \epsilon_0) } \angle ( g, x - x^* ) > 0.
\end{align*}
The vector $z_2 - x^*$ is a tangent vector at the point $z_2$ on the ball $\bar{\mathcal{B}}$ and has angle $\angle (z_2 - x^*, \bar{x} - x^*) = \arcsin \big(\frac{ \epsilon_0}{ \| x^* - \bar{x} \| } \big)$. Furthermore, the point $z_2$ is on the plane $P_2$ since $z_2 - x^*$ and $\bar{x} - x^*$ must be on the same 2D-plane in order to minimize the angle between them. The illustration of the second case is shown in Fig. \ref{fig: cir_case2}.

Since $P_2$ passes through the center $\bar{x}$ of the ball $\bar{\mathcal{B}} (\bar{x}, \epsilon_0)$, we can define the great circle $\mathcal{G} \subset P_2$ which is the intersection of $\partial \bar{\mathcal{B}}$ with $P_2$. Since $z_1$ and $z_2$ are in $\mathcal{G}$ (and also in $P_2$), the geodesic path $\mathcal{C}_0$ is in $P_2$. The geodesic path in both cases is also shown in Fig. \ref{fig: cir_case1} and Fig. \ref{fig: cir_case2}.

Before stating the theorem, define the open half-space
\begin{align*}
    \mathcal{H}(g, x^*) \triangleq \{ x \in \mathbb{R}^n: \langle g , u(x^*, x) \rangle < 0 \},
\end{align*}
Note that $\mathcal{C}_0 (\bar{x}, \epsilon_0, g, x^*) \cap \mathcal{H}(g, x^*) \neq \emptyset$ as long as $\angle (g, z_2 - x^*) < \frac{\pi}{2}$ or equivalently, $\angle (g, \bar{x} - x^*) < \frac{\pi}{2} + \arcsin \big(\frac{ \epsilon_0}{ \| x^* - \bar{x} \| } \big)$ as shown in Fig. \ref{fig: grad_case1} and \ref{fig: grad_case2}.

We now come to the main result of this section.

\begin{figure}
  \centering
  \includegraphics[width=0.8\linewidth]{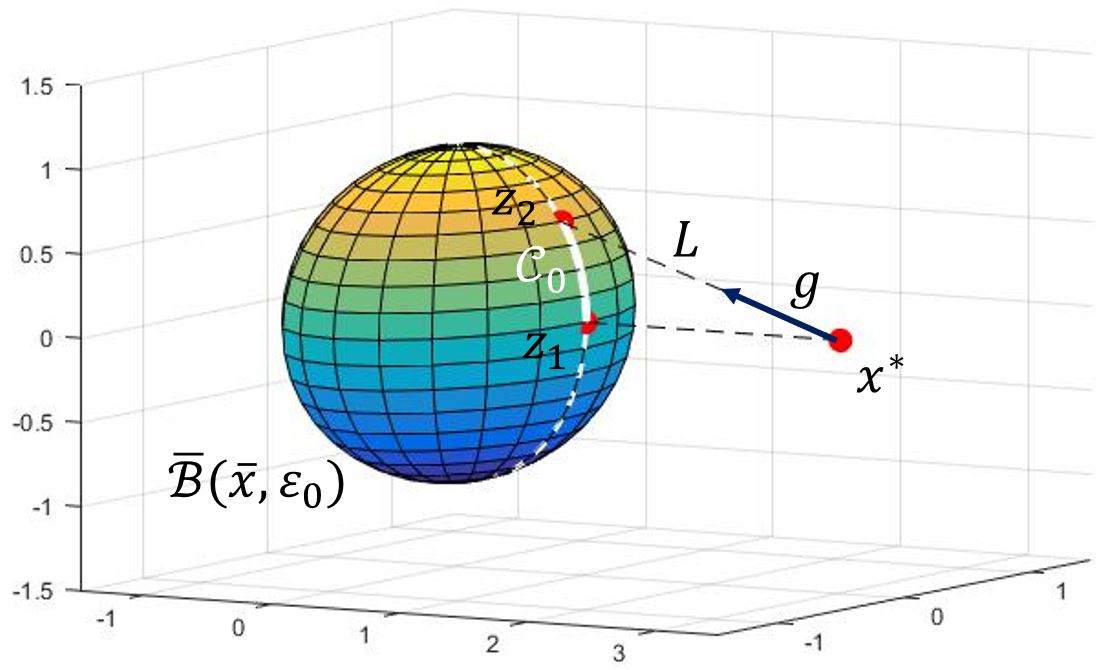}
     \caption{The points $z_1$ and $z_2$, and the curve $\mathcal{C}_0$ on the surface $\partial \overline{\mathcal{B}} (\bar{x}, \epsilon_0)$ in the case that the ray $L$ passes through the ball $\overline{\mathcal{B}} (\bar{x}, \epsilon_0)$.}
  \label{fig: cir_case1}
\end{figure}  

\begin{figure}
  \centering
    \includegraphics[width=0.8\linewidth]{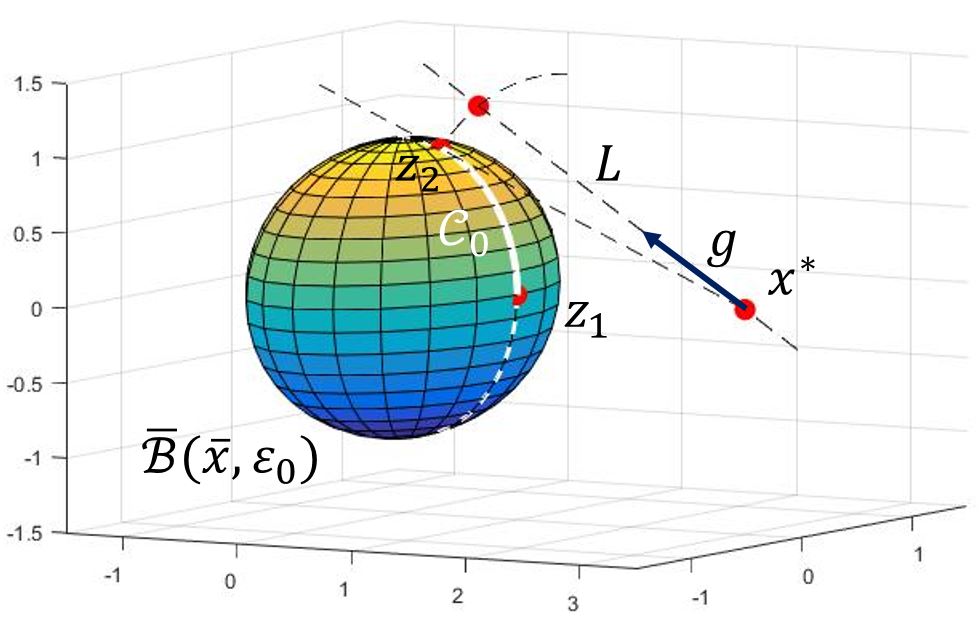}
  \caption{The points $z_1$ and $z_2$, and the curve $\mathcal{C}_0$ on the surface $\partial \overline{\mathcal{B}} (\bar{x}, \epsilon_0)$ in the case that the ray $L$ does not pass through the ball $\overline{\mathcal{B}} (\bar{x}, \epsilon_0)$.}
  \label{fig: cir_case2}
\end{figure}

\begin{theorem}
  Suppose $f^k \in \mathcal{F}$ and $\epsilon_0 > 0$. A necessary condition for a point $x^* \in \mathbb{R}^n$ to be in $\mathcal{M} \big(f^k, \overline{\mathcal{B}} (\bar{x}, \epsilon_0), \sigma \big) \setminus \overline{\mathcal{B}} (\bar{x}, \epsilon_0)$ is 
\begin{align}
    \min_{ ( x_u^*, g^k ) \in \tilde{\mathcal{X}} (f^k, \bar{x}, \epsilon_0, x^*) } \frac{ \langle g^k , u(x^*, x_u^*) \rangle }{\Vert x^*-x_u^* \Vert} \leq - \sigma \label{eqn: result}
\end{align}
where 
\begin{align}
    \tilde{\mathcal{X}} (f^k, \bar{x}, \epsilon_0, x^*) \triangleq \big\{ (x, g) \in \mathcal{C}_0 (\bar{x}, \epsilon_0, g, x^*) \times \partial f^k (x^*) \}. \label{set: X_tilde}
\end{align} \label{thm: sphere}
\end{theorem}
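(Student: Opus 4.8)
The plan is to deduce Theorem~\ref{thm: sphere} from the general necessary condition \eqref{eqn: result1} of Theorem~\ref{thm: nc_general} by showing that, when $\mathcal{A} = \overline{\mathcal{B}}(\bar x, \epsilon_0)$, the inner minimization over the visible cap $\tilde{\mathcal{A}}(\mathcal{A}, x^*)$ can be restricted to the geodesic arc $\mathcal{C}_0$ without changing the value of the minimum whenever it is negative. Since $x^* \in \mathcal{M} \setminus \overline{\mathcal{B}}(\bar x, \epsilon_0)$ already forces the minimum over $\mathcal{X}$ in \eqref{set: X} to be $\le -\sigma$ by Theorem~\ref{thm: nc_general}, it suffices to prove that for each fixed $g \in \partial f^k(x^*)$,
\[
\min_{x_u^* \in \tilde{\mathcal{A}}} \frac{\langle g, u(x^*, x_u^*)\rangle}{\|x^*-x_u^*\|} = \min_{x_u^* \in \mathcal{C}_0(\bar x, \epsilon_0, g, x^*)} \frac{\langle g, u(x^*, x_u^*)\rangle}{\|x^*-x_u^*\|},
\]
and then to observe that dropping the open half-space constraint $\langle g, u(x^*,x)\rangle < 0$ present in $\mathcal{X}$ is harmless: any pair attaining a value $\le -\sigma < 0$ automatically has negative numerator, so the minimum over the constraint-free set $\tilde{\mathcal{X}}$ in \eqref{set: X_tilde} and the minimum over $\mathcal{X}$ agree whenever either is $\le -\sigma$ (the degenerate directions $g = \alpha(x^*-\bar x)$, $\alpha \ge 0$, admit no feasible pair and may be set aside). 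I would first invoke the coordinate invariance of the objective under $\mathbf{R}(\mathbf{T}(\cdot))$ established before the theorem to assume $\bar x = \mathbf{0}$, $x^* = (a,0,\dots,0)$ with $a = \|x^*-\bar x\| > \epsilon_0$, and $g = (g_{(1)}, g_{(2)}, 0, \dots, 0)$ with $g_{(2)} \ge 0$.

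The first main step reduces the search from the whole sphere to the plane $P_2$. Writing a boundary point as $x_u^* = \epsilon_0\,\omega$ with $\|\omega\|=1$ and $v = x^* - x_u^*$, a short computation using $\sum_i \omega_{(i)}^2 = 1$ yields $\|v\|^2 = a^2 + \epsilon_0^2 - 2a\epsilon_0\,\omega_{(1)}$, so the denominator depends on $\omega$ only through the axial coordinate $\omega_{(1)}$, whereas the numerator $\langle g, v\rangle = g_{(1)}a - \epsilon_0\big(g_{(1)}\omega_{(1)} + g_{(2)}\omega_{(2)}\big)$ depends only on $\omega_{(1)}, \omega_{(2)}$. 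Hence, for fixed $\omega_{(1)}$ the ratio is minimized by making the numerator most negative, which, since $g_{(2)} \ge 0$, forces $\omega_{(2)} = \sqrt{1-\omega_{(1)}^2}$ and $\omega_{(3)} = \dots = \omega_{(n)} = 0$. Because membership in $\tilde{\mathcal{A}}$ depends only on $\omega_{(1)}$ (the cap is $\omega_{(1)} \ge \epsilon_0/a$ by rotational symmetry about the axis), this projection keeps the point in $\tilde{\mathcal{A}}$, so the minimizer lies on the upper half ($\omega_{(2)} \ge 0$) of the great circle $\mathcal{G} = \partial \overline{\mathcal{B}} \cap P_2$, and its intersection with $\tilde{\mathcal{A}}$ is exactly the arc from $z_1$ in \eqref{def: z_1} to the upper horizon point.

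The second main step is a one-dimensional analysis on that arc. Parametrizing it by $x_u^* = \epsilon_0(\cos t, \sin t, 0,\dots,0)$, $t \in [0, t_h]$ with $\cos t_h = \epsilon_0/a$, the objective becomes $h(t) = N(t)/D(t)$, where $N(t) = g_{(1)}a - \epsilon_0(g_{(1)}\cos t + g_{(2)}\sin t)$ and $D(t) = a^2 + \epsilon_0^2 - 2a\epsilon_0\cos t > 0$; here $t=0$ gives $z_1$ and $t = t_2$ gives $z_2$ from \eqref{def: z_2}, so $[0,t_2] = \mathcal{C}_0$. When the ray $L$ misses the ball we have $t_2 = t_h$ and nothing more is needed. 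When $L$ pierces the ball, $x^* - z_2$ points opposite to $g$, so $\langle g, u(x^*, z_2)\rangle = -\|g\|$ is as negative as possible; moving past $z_2$ toward the horizon both increases the angle $\angle(g, x_u^* - x^*)$ away from $0$ (making the numerator less negative) and increases $\|v\|$ (as $\cos t$ decreases), so on its feasible portion $h$ is increasing on $(t_2, t_h)$ and attains no new minimum there. Consequently the minimum over $[0, t_h]$ is attained on $[0, t_2] = \mathcal{C}_0$.

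I expect the last step to be the main obstacle: one must make precise that the competing effects of numerator and denominator indeed give $h'(t) > 0$ on $(t_2, t_h)$ in the piercing case, and justify the split according to whether $L$ meets the ball, since this is exactly where the definition of $z_2$ is used. The plane reduction and the removal of the half-space constraint are comparatively routine once invariance of the objective is in hand. Assembling the two steps shows the per-$g$ minimum is attained on $\mathcal{C}_0(g)$; minimizing over $g \in \partial f^k(x^*)$ then converts \eqref{eqn: result1} into \eqref{eqn: result} with $\tilde{\mathcal{X}}$ as in \eqref{set: X_tilde}.
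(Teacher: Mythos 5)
Your proposal is correct and follows essentially the same route as the paper's own proof: both reduce Theorem~\ref{thm: sphere} to Theorem~\ref{thm: nc_general} by showing that every admissible boundary point of the ball is dominated by a point of $\mathcal{C}_0$, using exactly the paper's two mechanisms — a same-distance move into the plane $P_2$ (which can only decrease the numerator, since the distance to $x^*$ depends only on the axial coordinate) and domination of all points farther from $x^*$ than $z_2$ by $z_2$ itself. The only difference is presentational: you make explicit in coordinates (and via the monotonicity of $h$ on $(t_2,t_h)$) the steps the paper asserts geometrically, such as the claim that $\mathcal{C}_0\subset P_2$ yields the smaller angle at equal distance.
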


\begin{proof}
For a given $g^k_{x^*} \in \partial f^k (x^*)$ with $g^k_{x^*} \neq \mathbf{0}$, we consider the angle $\angle (g^k_{x^*}, x^* - \bar{x})$ in two disjoint cases: 
\begin{enumerate}[(a)]
    \item Suppose the gradient $g^k_{x^*}$ is colinear with the vector $x^* - \bar{x}$.
\begin{enumerate}[(i)]
    \item If $g^k_{x^*} = \alpha (x^* - \bar{x})$ for some $\alpha > 0$ (i.e., $g^k_{x^*}$ is pointing directly away from $\bar{\mathcal{B}} (\bar{x}, \epsilon_0)$ on the $x_{(1)}$-axis), then $\langle g^k_{x^*} , u(x^*, x) \rangle > 0$ for all $x \in \overline{\mathcal{B}} (\bar{x}, \epsilon_0)$. Thus, no points in $\overline{\mathcal{B}} (\bar{x}, \epsilon_0)$ can satisfy the inequality \eqref{eqn: checkpt}.
    \item If $g^k_{x^*} = \alpha (x^* - \bar{x})$ for some $\alpha < 0$ (i.e., $g^k_{x^*}$ is pointing directly toward $\bar{\mathcal{B}} (\bar{x}, \epsilon_0)$ on the $x_{(1)}$-axis), then the ray $L$ passes through the ball $\bar{\mathcal{B}} (\bar{x}, \epsilon_0)$ at $z_1$, and thus $\{ z_1 (x^*) \} = \{ z_2 (g^k_{x^*}, x^*) \} = \mathcal{C}_0$. Furthermore, $\bar{\mathcal{B}} \subset \mathcal{H}$. For simplicity of notation, we will omit the arguments and write $z_1 (x^*)$ and $z_2 (g^k_{x^*}, x^*)$ as $z_1$ and $z_2$, respectively.
    From \eqref{def: z_2}, for all $x \in \partial \bar{\mathcal{B}}$, we have 
    \begin{align}
        &\angle (g^k_{x^*}, u(z_2, x^*)) \leq \angle (g^k_{x^*}, u(x, x^*))  \nonumber \\
        \Rightarrow \;\; &\angle (g^k_{x^*}, u(x^*, z_2)) \geq \angle (g^k_{x^*}, u(x^*, x)) \nonumber \\
        \Rightarrow \;\; &\cos \angle (g^k_{x^*}, u(x^*, z_2)) \leq \cos \angle (g^k_{x^*}, u(x^*, x)) \nonumber \\
        \Rightarrow \;\; &\langle g^k_{x^*} , u(x^*, z_2) \rangle \leq \langle g^k_{x^*} , u(x^*, x) \rangle. \label{eqn: inner}
    \end{align}
    Since $z_2$, $x \in \mathcal{H}$, we have $\langle g^k_{x^*} , u(x^*, z_2) \rangle \leq \langle g^k_{x^*} , u(x^*, x) \rangle < 0$.
    In addition, from \eqref{def: z_1}, for all $x \in \partial \bar{\mathcal{B}}$, we have $0 < \| x^* - z_1 \| \leq \| x^* - x \|$. Since $z_1 = z_2$ in this case, we obtain 
    \begin{align*}
        \frac{- \| g^k_{x^*} \|}{ \| x^* - z_1 \|} = \frac{ \langle g^k_{x^*} , u(x^*, z_1) \rangle }{\Vert x^*-z_1 \Vert} \leq \frac{ \langle g^k_{x^*} , u(x^*, x) \rangle }{\Vert x^*-x \Vert}
    \end{align*}
    for all $x \in \partial \bar{\mathcal{B}}$. Thus, it suffices to only check $z_1 \in \mathcal{C}_0$ to see if \eqref{eqn: checkpt} is satisfied.
    \end{enumerate}
    \item Suppose the gradient $g^k_{x^*}$ is not colinear with the vector $x^* - \bar{x}$. Then we can define the points $z_1$ and $z_2$ as described earlier.
    If 
    $$\angle (g^k_{x^*}, \bar{x} - x^*) \geq \frac{\pi}{2} + \arcsin \Big( \frac{\epsilon_0}{\| x^* - \bar{x} \|} \Big),$$ 
    then $\bar{\mathcal{B}} (\bar{x}, \epsilon_0) \cap \mathcal{H}(g^k_{x^*}, x^*) = \emptyset$ as shown in Fig. \ref{fig: grad_case1}, and no points in $\overline{\mathcal{B}} (\bar{x}, \epsilon_0)$ can satisfy the inequality \eqref{eqn: checkpt}.
    If 
    $$\angle (g^k_{x^*}, \bar{x} - x^*) < \frac{\pi}{2} + \arcsin \Big( \frac{\epsilon_0}{\| x^* - \bar{x} \|} \Big),$$ 
    then $\bar{\mathcal{B}} (\bar{x}, \epsilon_0) \cap \mathcal{H}(g^k_{x^*}, x^*) \neq \emptyset$ and $z_2 \in \mathcal{H}(g^k_{x^*}, x^*)$  as shown in Fig. \ref{fig: grad_case2}.
    In this case, consider a point $x \in \partial \bar{\mathcal{B}} (\bar{x}, \epsilon_0) \cap \mathcal{H} (g^k_{x^*}, x^*)$ and $x \notin \mathcal{C}_0$. 
    \begin{enumerate}[(i)]
    \item Suppose $\Vert x - x^* \Vert > \Vert z_2 - x^* \Vert$. By the definition of $z_2$ in \eqref{def: z_2}, we have $\angle (g^k_{x^*}, u(z_2, x^*)) \leq \angle (g^k_{x^*}, u(x, x^*))$. Since $z_2$, $x \in \mathcal{H}$, using the same argument as \eqref{eqn: inner}, we get $\langle g^k_{x^*} , u(x^*, z_2) \rangle \leq \langle g^k_{x^*} , u(x^*, x) \rangle < 0$. Therefore, 
    \begin{align*}
    \frac{ \langle g^k_{x^*} , u(x^*, z_2) \rangle }{\Vert z_2-x^* \Vert} < \frac{ \langle g^k_{x^*} , u(x^*, x) \rangle }{\Vert x-x^* \Vert},
    \end{align*}
    i.e., if $x$ satisfies \eqref{eqn: checkpt}, then so does $z_2$. This means that we can consider $z_2 \in \mathcal{C}_0$ instead of any point in $\partial \bar{\mathcal{B}} (\bar{x}, \epsilon_0) \cap \mathcal{H} (g^k_{x^*}, x^*)$ with greater distance from $x^*$.
    \item Suppose $\Vert x - x^* \Vert \leq \Vert z_2 - x^* \Vert$. Since $\mathcal{C}_0$ is connected and $h(y) = \| y - x^* \|$ is a continuous function, $\{ h( y) : y \in \mathcal{C}_0 \}$ is connected. Then, we have 
    \begin{align*}
        \Big[ \| z_1 - x^* \|, \|z_2 - x^*\| \Big] \subseteq \big\{ \| y - x^* \| : y \in \mathcal{C}_0 \big\}.
    \end{align*}
    Thus, there exists a $z \in \mathcal{C}_0 \cap \mathcal{H}$ such that $\Vert z - x^* \Vert = \Vert x - x^* \Vert$. However, since $\mathcal{C}_0 \subset P_2$, we get that 
    \begin{align*}
        \angle ( g^k_{x^*}, u(z, x^*) ) \leq \angle ( g^k_{x^*}, u(x, x^*) ).
    \end{align*}
    Furthermore, since $z$, $x \in \mathcal{H}$, using the same argument as \eqref{eqn: inner}, we get $\langle g^k_{x^*} , u(x^*, z) \rangle < \langle g^k_{x^*} , u(x^*, x) \rangle < 0$. In this case, we also have 
    \begin{align*}
        \frac{ \langle g^k_{x^*} , u(x^*, z) \rangle }{\Vert z-x^* \Vert} \leq \frac{ \langle g^k_{x^*} , u(x^*, x) \rangle }{\Vert x-x^* \Vert},
    \end{align*}
    i.e., if $x$ satisfies \eqref{eqn: checkpt}, then so does $z$. 
    \end{enumerate}
\end{enumerate}
Thus, we conclude that for each point $x \in \partial \bar{\mathcal{B}} \cap \mathcal{H}$, there is a point $z \in \mathcal{C}_0$ such that $\frac{\langle g^k_{x^*}, u(x^*, z) \rangle}{ \| x^* - z \| } \leq \frac{ \langle g^k_{x^*}, u(x^*, x) \rangle }{ \| x^* - x \| }$. Therefore, to check if there is a point $x \in \partial \bar{\mathcal{B}} \cap \mathcal{H}$ satisfying \eqref{eqn: checkpt}, we only need to check points in $\mathcal{C}_0$,  yielding \eqref{eqn: result}. 
\end{proof}

\begin{figure}
  \centering
    \includegraphics[width=0.65 \linewidth]{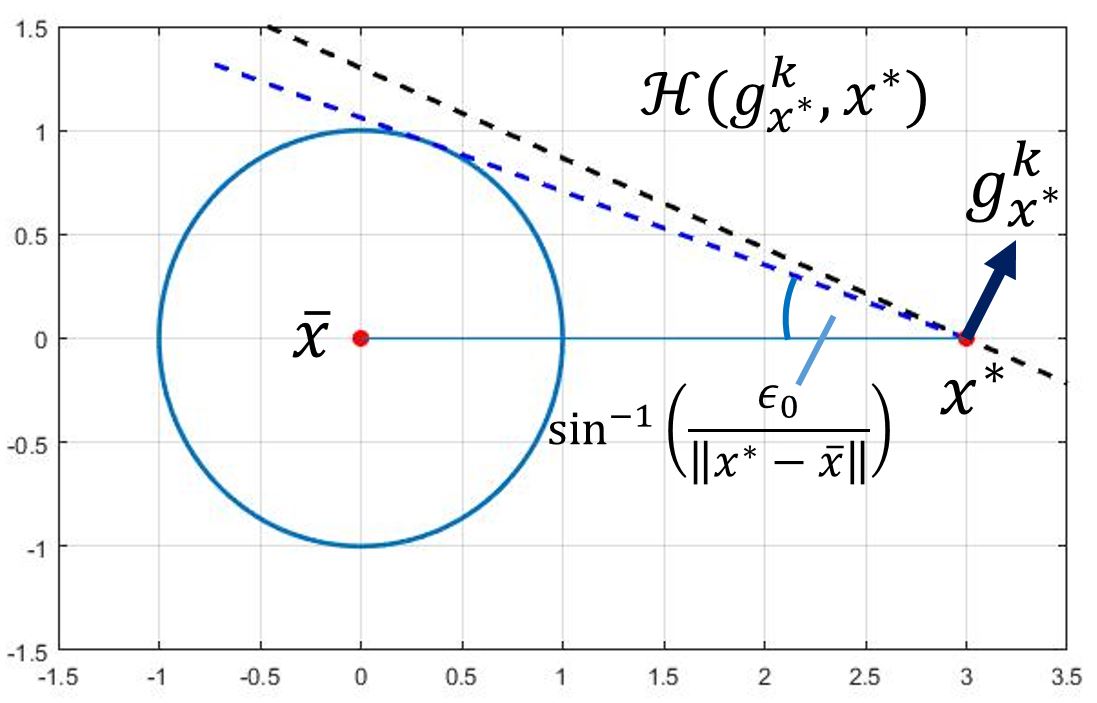}
     \caption{The area above the black dotted line is $\mathcal{H}(g^k_{x^*}, x^*)$ and the blue dotted line shows the angle $\arcsin \big( \frac{\epsilon_0}{\| x^* - \bar{x} \|} \big)$. In this case, the angle $\angle (g^k_{x^*}, \bar{x} - x^*) \geq \frac{\pi}{2} + \arcsin \big( \frac{\epsilon_0}{\| x^* - \bar{x} \|} \big)$, so $\bar{\mathcal{B}} (\bar{x}, \epsilon_0) \cap \mathcal{H}(g^k_{x^*}, x^*) = \emptyset$.}
     \label{fig: grad_case1}
\end{figure}

\begin{figure}
    \centering
    \includegraphics[width=0.65\linewidth]{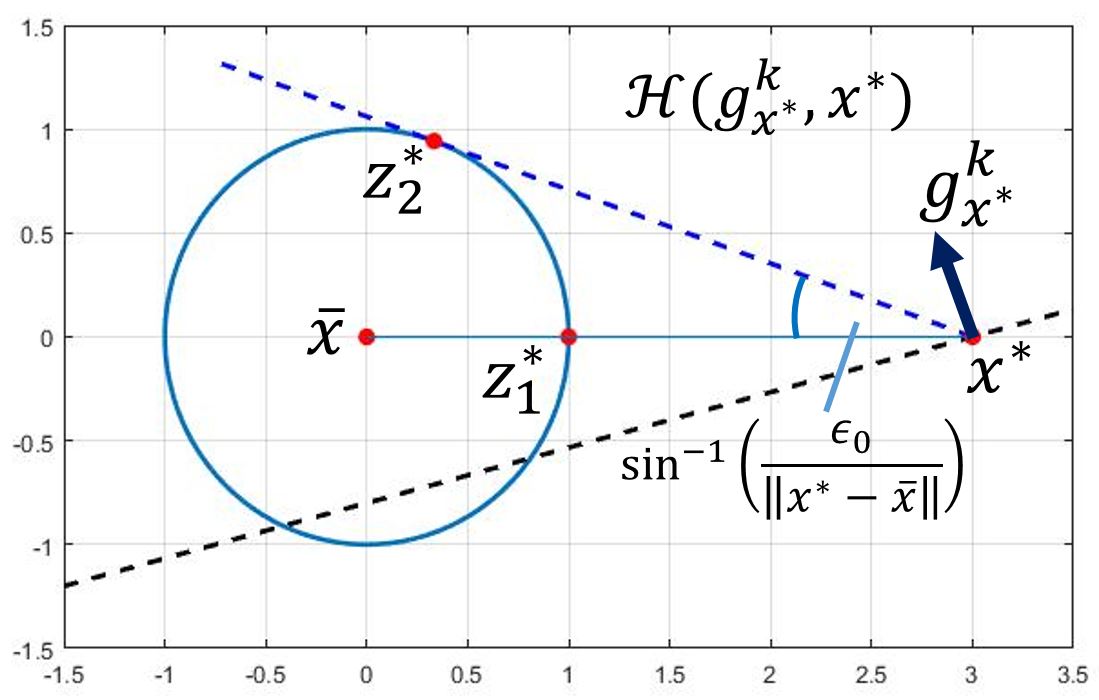}
     \caption{The area above the black dotted line is $\mathcal{H}(g^k_{x^*}, x^*)$ and the blue dotted line shows the angle $\arcsin \big( \frac{\epsilon_0}{\| x^* - \bar{x} \|} \big)$. In this case, the angle $\angle (g^k_{x^*}, \bar{x} - x^*) < \frac{\pi}{2} + \arcsin \big( \frac{\epsilon_0}{\| x^* - \bar{x} \|} \big)$, so $\bar{\mathcal{B}} (\bar{x}, \epsilon_0) \cap \mathcal{H}(g^k_{x^*}, x^*) \neq \emptyset$.}
    \label{fig: grad_case2}
\end{figure}

In fact, we can replace $\mathcal{C}_0 (\bar{x}, \epsilon_0, g, x^*)$ in Theorem \ref{thm: sphere} by $\mathcal{C}_0 (\bar{x}, \epsilon_0, g, x^*) \cap \mathcal{H}(g, x^*)$. However, for  simplicity of exposition, we forego the discussion of this further reduction in search space.

The set $\tilde{\mathcal{X}} (f^k, \bar{x}, \epsilon_0, x^*)$ defined in \eqref{set: X_tilde} suggests that we do not have to search for a pair $(x^*, g^k)$ that satisfies the inequality
\begin{align*}
    \frac{ \langle g^k , u(x^*, x_u^*) \rangle }{\Vert x^*-x_u^* \Vert} \leq - \sigma 
\end{align*}
throughout the set $\mathcal{X} (f^k, \mathcal{A}, x^*)$ defined in \eqref{set: X} but can instead restrict our attention to be in the set $\tilde{\mathcal{X}} (f^k, \bar{x}, \epsilon_0, x^*)$ in \eqref{set: X_tilde}. Since the curve $\mathcal{C}_0$ depends on the vector $g^k$ that we choose from $\partial f^k(x^*)$, we have to first select $g^k \in \partial f^k(x^*)$ and then we can consider the points on the curve $\mathcal{C}_0$ to see if they satisfy \eqref{eqn: checkpt}.
We will use this in the algorithm for computing the region $\mathcal{M}$ in the next section.

\section{Algorithm and Example}

\subsection{Algorithm}

Consider the case from the previous section where the uncertainty set is a ball, i.e., $\mathcal{A} = \bar{\mathcal{B}} (\bar{x} , \epsilon_0)$. In this subsection, we will give an algorithm (Algorithm \ref{alg: ball}) to identify the region that satisfies the necessary condition \eqref{eqn: result}. We provide a discussion of each of the steps below. 

\begin{algorithm}
\caption{Region $\mathcal{M}$ Identification (Ball Case)} \label{alg: ball}
Let $X \subseteq \dom f^k$ be a set of points in the space \\
\textbf{Input} $X$, $f^k \in \mathcal{F}$, $\bar{x} \in \mathbb{R}^n$, $\epsilon_0 > 0$, and $\sigma > 0$  \\
\textbf{Output} $minimizer(X)$
\begin{algorithmic}[1]
\For {$x^* \in X$} \Comment{Loop through the space} 
\State $minimizer(x^*) \gets \text{false}$ 
\State $d \gets \| \bar{x} - x^* \|$
\State $g \gets \nabla f^k(x^*)$ 
\State $\alpha \gets \angle (g, \bar{x} - x^* )$
\If {$\alpha < \frac{\pi}{2} + \arcsin \big( \frac{\epsilon_0}{  d } \big) $}   
\For{$\theta \in \big[0, \arccos (\frac{\epsilon_0}{ d }) \big]$}
\State $\| x^* - x_u^* \| \gets \sqrt{ d^2 + \epsilon_0^2 - 2 \epsilon_0 d \cos \theta}$
\State $\angle (g, x^* - x_u^*) \gets \alpha + \Big[ \pi - \arcsin \big( \frac{ \epsilon_0 \sin \theta }{ \| x^* - x_u^* \| } \big) \Big]$
\State $\langle g , u(x^*, x_u^*) \rangle \gets \| g \| \cos  \angle (g, x^* - x_u^*) $ 
\If {$\frac{ \langle g , u(x^*, x_u^*) \rangle }{\Vert x^*-x_u^* \Vert} \leq -\sigma$}
\State $minimizer(x^*) \gets \text{true}$
\EndIf
\EndFor
\EndIf
\EndFor
\State \Return $minimizer(X)$
\end{algorithmic}
\end{algorithm}

Let $X$ be a set of points; we wish to check whether each point in $X$ is a potential minimizer of $f^k + f^u$. For simplicity, we assume that the function $f^k$ is differentiable, i.e., $\partial f^k(x^*) = \{ \nabla  f^k (x^*) \}$ and the set of points $X \subseteq \dom f^k$.  
For example, we can use \texttt{linspace} in MATLAB to form a range for each axis, followed by using \texttt{meshgrid} to construct $X$. The object $minimizer$ is an array that keeps a Boolean value for each point in $X$ to indicate whether it is a potential minimizer. 
First, we loop through each point $x^*$ in the set $X$ and assign Boolean `false' to that $x^*$. In order to change the Boolean to be `true', the point $x^*$ has to satisfy the inequality \eqref{eqn: result}. 
Before checking that inequality, we need to compute several intermediate variables. In the algorithm, we compute the distance between the center of the ball $\bar{x}$ and the point $x^*$ ($d \gets \| \bar{x} - x^* \|$), the gradient of $f^k$ at $x^*$ ($g \gets \nabla f^k(x^*)$), and the angle between the gradient and reference ($\alpha \gets \angle (g, \bar{x} - x^*)$). Note that we can compute $\alpha$ explicitly by
\begin{align*}
    \alpha \gets \angle (g, \bar{x} - x^*) = \arccos \Big( \frac{ \langle g, \bar{x} - x^* \rangle }{ \|g \| \| \bar{x} - x^* \|} \Big).
\end{align*}

We then verify the condition 
\begin{align*}
    \angle (g, \bar{x} - x^*) < \frac{\pi}{2} + \arcsin \Big( \frac{\epsilon_0}{\| x^* - \bar{x} \|} \Big) 
\end{align*}
(line 6); if this is not satisfied, no points in $\overline{\mathcal{B}} (\bar{x}, \epsilon_0)$ can satisfy the inequality \eqref{eqn: checkpt} as argued in the proof of Theorem \ref{thm: sphere} and illustrated in Fig. \ref{fig: grad_case1}.
The next step is to compute the path $\mathcal{C}_0$, which we parametrize by using the variable $\theta$. The variable $\theta$ in the algorithm corresponds to 
\begin{align*}
    \theta = \angle (x_u^* - \bar{x}, x^* - \bar{x}) \quad \text{where} \quad x_u^* \in \mathcal{C}_0
\end{align*}
as shown in Fig. \ref{fig: compute_distance}.
So, we need to know the range of $\theta$ that characterizes the path $\mathcal{C}_0$. 
This range can be computed by considering the points $z_1$ and $z_2$, at which the angle $\theta$ equals $0$ and $\arccos (\frac{\epsilon_0}{ \| \bar{x} - x^* \| } )$, respectively, as shown in Fig. \ref{fig: compute_range}.
Consider Fig. \ref{fig: compute_distance}. For each $\theta$ in the range (discretized to a sufficiently fine resolution), we can compute the distance $\| x^* - x_u^* \|$ (line 8) by using the cosine law.
Consider Fig. \ref{fig: compute_angle}. We can compute the angle $\angle (g, x^* - x_u^*)$ (line 9) by using
\begin{align*}
    &\angle (g, x^* - x_u^*) = \angle (g, x^* - \bar{x}) + \arcsin \Big( \frac{ \epsilon_0 \sin \theta }{ \| x^* - x_u^* \| } \Big) \\
    &\quad \text{and} \quad \angle (g, x^* - \bar{x}) = ( \pi - \angle (g, \bar{x} - x^*) ).
\end{align*}
After that we compute the inner product $\langle g , u(x^*, x_u^*) \rangle$ (line 10).
Finally, we can compute the LHS of \eqref{eqn: result} and compare it to $- \sigma$. If the inequality \eqref{eqn: result} is satisfied by the current values $x^*$ and $\theta$, we set the Boolean associated to this $x^*$ to be `true'.

\begin{figure}
    \centering
    \includegraphics[width=0.60\linewidth]{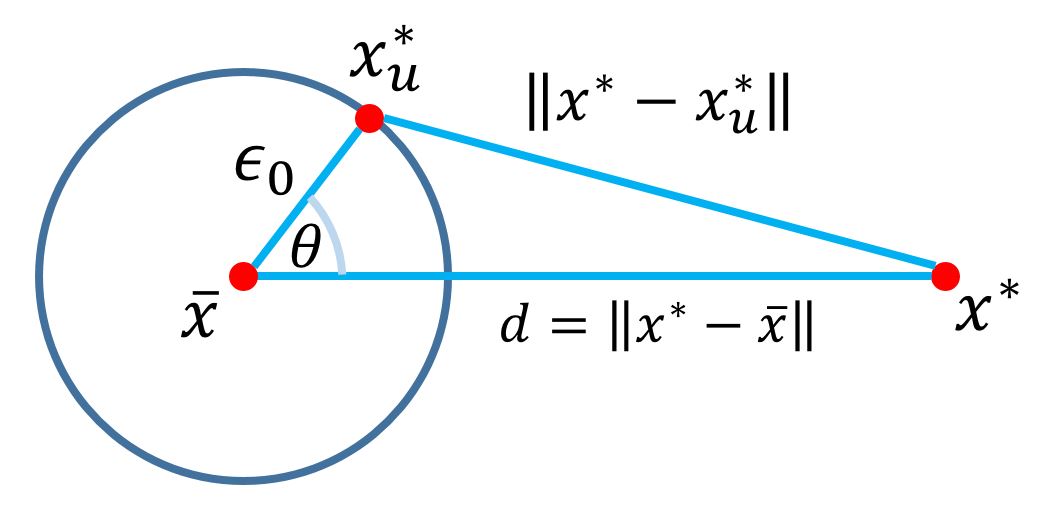}
     \caption{Given $\epsilon_0$, $d$, and $\theta$, we can compute $\| x^* - x_u^* \|$.}
    \label{fig: compute_distance}
\end{figure}  

\begin{figure}
    \centering
    \includegraphics[width=0.60\linewidth]{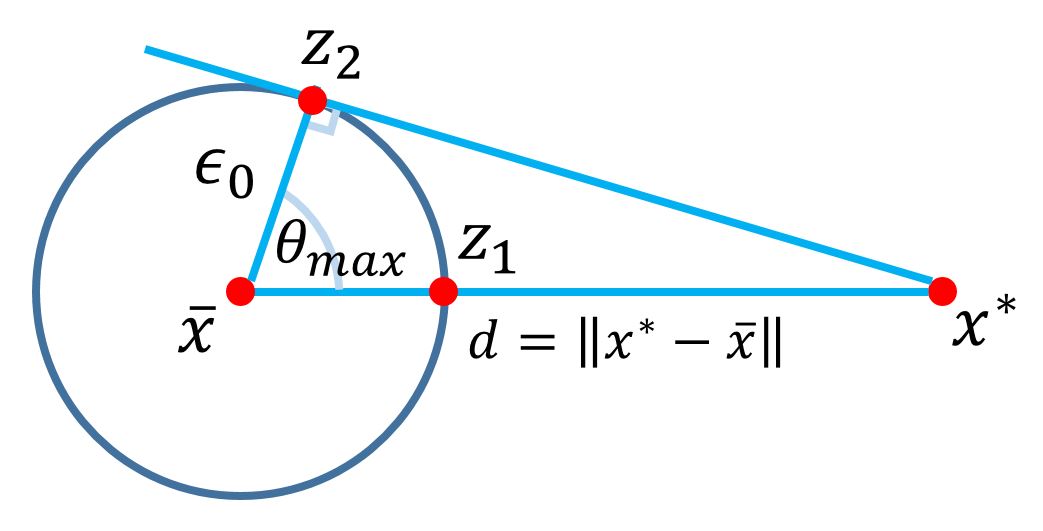}
     \caption{Given $\epsilon_0$ and $d$, we can compute $\angle (z_2 - \bar{x}, x^* - \bar{x})$.}
    \label{fig: compute_range}
\end{figure}  

\begin{figure}
    \centering
    \includegraphics[width=0.70\linewidth]{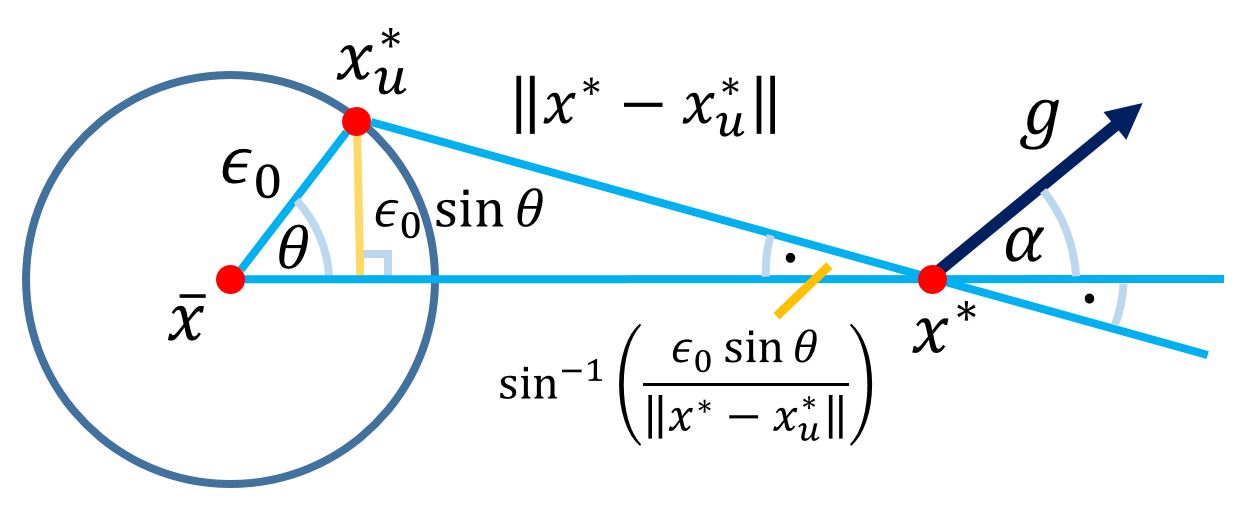}
     \caption{Given $\epsilon_0$, $\theta$, $\| x^* - x_u^* \|$, and $\alpha$, we can compute $\angle (g, x^* - x_u^*)$.}
    \label{fig: compute_angle}
\end{figure}

\subsection{Example}

Consider the known function $f^k(x) = (x_1 - 2)^2 + x_2^2$, and suppose the unknown function $f^u$ has minimizer in the ball centered at $\bar{x} = (0, 0)$. We vary the radius of the ball of uncertainty ($\epsilon_0$) among the values 0.1, 0.4, and 0.8, and the strong convexity parameter ($\sigma$) of the function $f^u$ among the values 0.25, 2.0, and 5.0. Examples of the region that contains the possible minimizer of the sum $f^k + f^u$ are shown in Fig. \ref{fig: simulation}. In the figure, the function $f^k(x)$ is shown by using level curves and the uncertainty ball is shown by the beige circle. The region containing the possible minimizers of $f^k + f^u$ (i.e., the set of points $x \in \mathbb{R}^n$ that satisfies \eqref{eqn: result}) is shown in blue (it contains the uncertainty set within it). Note that the solution region shrinks with increasing $\sigma$ and grows with increasing $\epsilon_0$.

\begin{figure}
  \centering
  \includegraphics[width=\linewidth]{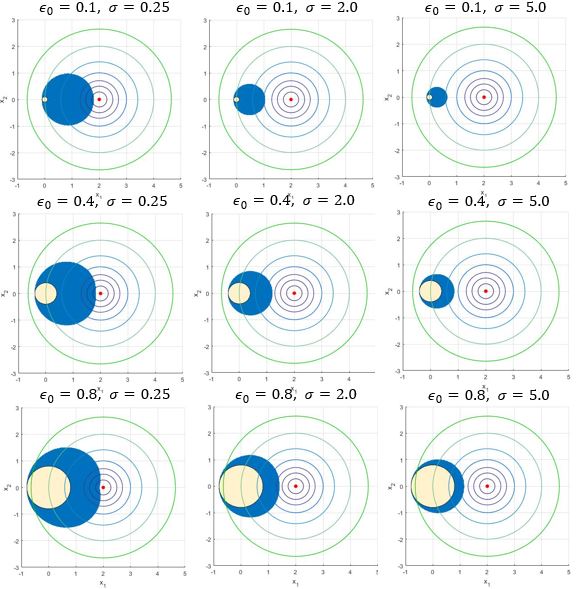}
  \caption{The function ${f}^k(x) = (x_1 - 2)^2 + x_2^2$ is shown by the level curves while the balls $ \overline{\mathcal{B}} (\bar{x}, \epsilon_0)$ with the center at $(0,0)$ are shown by the beige circle. The radius of the ball of uncertainty ($\epsilon_0$) and the strong convexity parameter ($\sigma$) of the function $f_m$ are varied and the solution sets are shown by the dark blue regions.}
  \label{fig: simulation}
\end{figure}

\section{Conclusions}

In this paper, we studied the properties of the minimizer of the sum of convex functions in which one of the functions is unknown but the others are known. However, we assumed that the unknown function is strongly convex with known convexity parameter, and that we have a region $\mathcal{A}$ where the minimizer of this function lies. We established a necessary condition for a given point to be a minimizer of the sum of known and unknown functions for general compact $\mathcal{A}$. We then considered a special case where the region of the unknown function's minimizer is a ball. In this case, we simplified the necessary condition and provided an algorithm to determine the region that satisfies the necessary condition.  

Future work could focus on providing sufficient conditions for a given point to be a minimizer (to complement our necessary condition). Alternatively, one could analyze properties of the set of solutions that satisfy the necessary condition.

\addtolength{\textheight}{-12cm}   




\bibliographystyle{IEEEtran}
\bibliography{refs2}

\end{document}